\newcommand*{\vv}[1]{\vec{\mkern1mu#1}\!}
\newcommand{\U}{\mathcal U}
\newcommand{\V}{\mathcal V}
\newcommand\w{\omega}
\newcommand{\IN}{\mathbb N}
\newtheorem{theorem}{Theorem}
\newtheorem{problem}{Problem}
\newtheorem{lemma}{Lemma}
\newtheorem{claim}{Claim}
\theoremstyle{definition}
\newtheorem{definition}{Definition}
\newtheorem{remark}{Remark}
\title[The closedness of complete subsemilattices]{The closedness of complete subsemilattices in functionally Hausdorff semitopological semilattices}
\author{Taras Banakh, Serhi\u\i\ Bardyla, Alex Ravsky}
\address{T.Banakh: Ivan Franko National University of Lviv (Ukraine) and Jan Kochanowski University in Kielce (Poland)}
\email{t.o.banakh@gmail.com}
\address{S.~Bardyla: Institute of Mathematics, Kurt G\"{o}del Research Center, Vienna (Austria)}
\thanks{The second author was supported by the Austrian Science Fund FWF (Grant  I 3709-N35).}
\address{A.Ravsky: Ya.Pidstryhach Institute for Applied Problems of Mechanics and Mathematics, Lviv, Ukraine}
\email{alexander.ravsky@uni-wuerzburg.de}
\subjclass{06B30, 54D10}
\begin{document}

\begin{abstract} A topologized semilattice $X$ is {\em complete} if each non-empty chain $C\subset X$ has $\inf C\in\bar C$ and $\sup C\in\bar C$. It is proved that for any complete subsemilattice $X$ of a functionally Hausdorff semitopological semilattice $Y$ the partial order $P=\{(x,y)\in X\times X:xy=x\}$ of $X$ is closed in $Y\times Y$ and hence $X$ is closed in $Y$. This implies that for any continuous homomorphism $h:X\to Y$ from a compete topologized semilattice  $X$ to a functionally Hausdorff semitopological semilattice $Y$ the image $h(X)$ is closed in $Y$. The functional Hausdorffness of $Y$ in these two results can be replaced by the weaker separation axiom $\vv{T}_{2\delta}$, defined in this paper.
\end{abstract}
\maketitle

In this paper we continue to study the closedness properties of complete semitopological semilattices, which were introduced and studied by the authors in \cite{BBm}, \cite{BBc}, \cite{BBw}, \cite{BBseq}, \cite{BBo}. It turns out that complete semitopological semilattices share many common properties with compact topological semilattices, in particular their continuous homomorphic images in Hausdorff topological semilattices are closed.

A {\em semilattice} is any commutative semigroup of idempotents (an element $x$ of a semigroup is called an {\em idempotent} if $xx=x$). 

Each semilattice carries a natural partial order $\le$ defined by $x\le y$ iff $xy=x=yx$. Many properties of semilattices are defined in the language of this partial order. In particular, for a point $x\in X$ we can consider its upper and lower sets $${\uparrow}x:=\{y\in X:xy=x\}\mbox{ \ and \ }{\downarrow}x:=\{y\in X:xy=y\}$$ in the partially ordered set $(X,\le)$.

% It is easy to see that each Hausdorff semitopological semilattice is ${\uparrow}{\downarrow}$-closed.

A subset $C$ of a semilattice $X$ is called a {\em chain} if $xy\in\{x,y\}$ for any $x,y\in C$. A semilattice $X$ is called {\em chain-finite} if each chain in $X$ is finite.

A semilattice endowed with a topology is called a {\em topologized semilattice}. 
A topologized semilattice $X$ is called a ({\em semi\,}){\em topological semilattice} if the semigroup operation $X\times  X\to X$, $(x,y)\mapsto xy$, is (separately) continuous. %A semitopological semilattice is called a {\em $T_1$-semitopological semilattice} if its topology satisfies the separation axiom $T_1$ (= all finite subsets are closed).

In \cite{Stepp1975} Stepp proved that for any homomorphism $h:X\to Y$ from a chain-finite semilattice to a Hausdorff topological semilattice $Y$, the image $h(X)$ is closed in $Y$. In \cite{BBm}, the first two authors improved this result of Stepp proving the following theorem.

\begin{theorem}[Banakh, Bardyla]\label{t:cf}  For any homomorphism $h:X\to Y$ from a chain-finite semilattice to a  Hausdorff semitopological semilattice $Y$, the image $h(X)$ is closed in $Y$.
\end{theorem}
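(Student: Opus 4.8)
\medskip\noindent\textbf{Proof strategy.}
The plan is to reduce the assertion to the case when $h$ is the identity embedding of a chain-finite subsemilattice of $Y$, and then to prove closedness of such a subsemilattice by replacing the limiting/compactness arguments available for compact topological semilattices with stabilization arguments built on the chain condition. For the reduction I would first check that $h(X)$ is a chain-finite subsemilattice of $Y$; being a subsemilattice is clear, and chain-finiteness follows because an infinite chain in $h(X)$ would, after passing to a countable subchain, contain a subsemilattice order-isomorphic to $(\IN,\min)$ or to $(\IN,\max)$, both of which are impossible: a copy of $(\IN,\max)$ would lift, via any choice of preimages $x_n\in h^{-1}(c_n)$, to the strictly decreasing sequence of partial products $x_0x_1\cdots x_n$ in $X$, while a copy of $(\IN,\min)$ is excluded since a chain-finite semilattice surjecting onto $(\IN,\min)$ has a least element sitting in the bottom fibre (every semilattice with a minimal element has a least element, and chain-finite semilattices have minimal elements), so deleting the bottom fibre and iterating would produce an infinite chain.

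Assume then that $Z\le Y$ is chain-finite, fix $z\in\overline Z$, and aim at $z\in Z$. I would use three things. By \emph{separate continuity}, each translation $\rho_x\colon Y\to Y$, $y\mapsto yx$ with $x\in Z$, is continuous and maps $Z$, hence $\overline Z$, into itself; so $zx\in\overline Z$, and since $z$ is idempotent in $Y$ one gets $zx\le x$, $zx\le z$, and that $x\mapsto zx$ is a homomorphism $Z\to Y$, whose image $zZ$ is therefore a chain-finite subsemilattice of $\overline Z$. By \emph{Hausdorffness} of $Y$, every coincidence set $\{y\in Y:f(y)=g(y)\}$ of continuous maps $f,g\colon Y\to Y$ is closed, so in particular each lower set $\mathord{\downarrow}_Y a=\{y\in Y:ya=y\}$ is closed in $Y$. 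Finally, by chain-finiteness, in $Z$ every nonempty subset has an infimum (realized as the infimum of a finite subset) and every monotone net is eventually constant, for otherwise one could extract an infinite strictly monotone chain.

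The engine of the proof should be the auxiliary statement that \emph{a chain-finite subsemilattice of $Y$ possessing a greatest element is closed in $Y$}, proved by an induction on the rank of that greatest element in the well-founded poset $Z$ (the minimal case being a singleton, which is closed since $Y$ is Hausdorff), with the translations $\rho_w$ used to push limit points down along the poset. Granting this, one first disposes of the case in which $z$ lies below some $a\in Z$: passing to a maximal $M\ge a$ in $Z$, the net $x_\alpha M$ obtained from any net $(x_\alpha)$ in $Z$ with $x_\alpha\to z$ converges to $zM=z$ and lies in $\mathord{\downarrow}_Z M$, so $z\in\mathord{\downarrow}_Z M\subseteq Z$. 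For the remaining case one feeds this back through the translations $\rho_z$: the net $zx_\alpha=\rho_z(x_\alpha)$ converges to $z^2=z$ and lies in $zZ$, and each of its terms lies below an element of $Z$, so by the previous case $zx_\alpha\in Z$; thus $z$ lies in the closure of the chain-finite subsemilattice $\{u\in Z:u\le z\}$, and one finishes by verifying (with the help of the eventual constancy of monotone nets and the existence of infima) that this subsemilattice is closed as well.

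The step I expect to be the main obstacle is precisely the lack of joint continuity of the multiplication of $Y$: one cannot simply pass to the limit in relations such as $x_\alpha y_\alpha=x_\alpha$, so every appeal to compactness or to continuity of the operation that would be available in compact topological semilattices must here be simulated through the chain condition, and organizing the rank induction for the auxiliary closedness statement so that everything fits together is the delicate part. (Equivalently, the whole argument can be cast as the closedness of the partial order $P=\{(x,y)\in Z\times Z:xy=x\}$ in $Y\times Y$, from which the closedness of $Z$ follows by intersecting $P$ with the closed diagonal of $Y\times Y$; the core difficulty is the same.)
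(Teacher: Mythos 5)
First, a caveat: this paper does not actually prove Theorem~\ref{t:cf} --- it quotes it from \cite{BBm} --- so there is no in-paper proof to compare against. Judged on its own terms, your proposal gets the easy outer layer right: the reduction to a chain-finite subsemilattice $Z\subset Y$ is correct (the lifting of a decreasing chain via partial products of preimages, and the ``least element of each fibre-semilattice'' argument for an increasing chain, both work), and the supporting facts you isolate --- continuity of translations, closedness of ${\downarrow}a=\{y\in Y: ya=y\}$ and ${\uparrow}a$ as coincidence sets in a Hausdorff space, existence of $\inf$ as a finite product in a chain-finite semilattice --- are the right toolkit.

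The gap is that the engine is never built, and the sketch funnels every case into one configuration that it does not handle. Both your rank induction for ``chain-finite $Z$ with greatest element $M$ is closed'' and your final step reduce to: $z\in\bar W$ for a chain-finite subsemilattice $W\subset Z\cap{\downarrow}_Y z$ (so $wz=w$ for all $w\in W$), conclude $z\in W$. (In the induction, once $z\ne M$ you can show $zw\in Z$ for $w\in Z\setminus\{M\}$ by applying the inductive hypothesis to ${\downarrow}_Z w$, but $Z\setminus\{M\}$ has no greatest element and the net may spread over infinitely many of its maximal elements, so you land exactly in this configuration; your closing sentence simply asserts it.) This configuration is not resolved by ``eventual constancy of monotone nets'' --- the approximating net is not monotone --- nor by taking infima: if you try to build a strictly increasing chain $v_0<v_1<\cdots$ in $W$ by setting $v_{n+1}=\inf S$ for $S$ a piece of the net inside a neighborhood of $z$ missing $v_n$, you only get $v_{n+1}\ge v_n$, and equality can occur ($\inf S=v_n$ even though every element of $S$ exceeds $v_n$); the resulting identity $s_1\cdots s_k=v_n$ with all $s_i$ near $z$ cannot be passed to the limit because the multiplication is only separately continuous. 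Overcoming exactly this obstruction is what requires the iterated one-factor-at-a-time construction (choose $x_n$ so that all partial products $x_i\cdots x_n$ and $x_i\cdots x_n x$ stay in prescribed neighborhoods $U_i^\circ$) that this paper carries out in Claim~\ref{cl1}, and an analogous construction is the actual content of the proof in \cite{BBm}. Without some version of it, your argument does not close.
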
 

Topological generalizations of the notion of chain-finiteness are the notions of  chain-compactness and completeness, discussed in \cite{BBo}.

A topologized semilattice $X$ is called
\begin{itemize}
\item {\em chain-compact} if each closed chain in $X$ is compact;
\item {\em complete} if each non-empty  chain $C\subset X$ has $\inf C\in\bar C$ and $\sup C\in\bar C$.
\end{itemize}
Here $\bar C$ stands for the closure of $C$ in $X$.  Chain-compact and complete topologized semilattices appeared to be very helpful in studying the closedness properties of topologized semilattices, see \cite{BBm}, \cite{BBc}, \cite{BBw}, \cite{BBseq}, \cite{BBo}. By Theorem 3.1 \cite{BBm}, a Hausdorff semitopological semilattice is chain-compact if and only if complete (see also Theorem 4.3 \cite{BBo} for generalization of this characterization to topologized posets).
In \cite{BBm} the first two authors proved the following closedness property of complete topologized semilattices.

\begin{theorem}[Banakh, Bardyla]\label{t:kc} For any continuous homomorphism $h:X\to Y$ from a complete topologized semilattice $X$ to a Hausdorff topological semilattice $Y$, the image $h(X)$ is closed in $Y$.
\end{theorem}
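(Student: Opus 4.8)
The plan is to prove directly that $h(X)$ coincides with its closure in $Y$: given $y\in\overline{h(X)}$ I want to produce $x\in X$ with $h(x)=y$. I would first record two elementary facts about the Hausdorff topological semilattice $Y$. (1) Since multiplication in $Y$ is continuous, for every $y\in Y$ the translation $\lambda_y\colon z\mapsto zy$ is continuous, so the sets ${\uparrow}y=\{z\in Y:zy=y\}=\lambda_y^{-1}(y)$ and ${\downarrow}y=\{z\in Y:zy=z\}$ are closed (the latter being the equaliser of $\lambda_y$ and $\mathrm{id}_Y$). (2) A non-empty chain $D\subseteq Y$ has at most one lower bound in $\bar D$ and at most one upper bound in $\bar D$: if $d_1,d_2\in\bar D$ are both lower bounds and $d_1=\lim_i d_i$ with $d_i\in D$, then $d_2d_i=d_2$ for all $i$, so $d_2d_1=d_2$, i.e. $d_2\le d_1$, and symmetrically $d_1\le d_2$.

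Next I would reduce everything to a single claim: \emph{for $y\in\overline{h(X)}$ there is a non-empty chain $C\subseteq X$ with $y\in\overline{h(C)}$.} Granting it, $h(C)$ is a chain in $Y$ whose closure contains $y$; pulling back a net $(h(c_i))$ in $h(C)$ converging to $y$ to the corresponding net $(c_i)$ in $C$, each $c_i$ is comparable with a fixed $c\in C$, so one of the sets $\{i:c_i\ge c\}$, $\{i:c_i\le c\}$ is cofinal, and passing to the limit shows $y$ is comparable with $h(c)$. Thus $C=C_+\cup C_-$, where $C_+=\{c\in C:h(c)\ge y\}$ and $C_-=\{c\in C:h(c)\le y\}$ (if these overlap we are finished at once), and since $\overline{h(C)}=\overline{h(C_+)}\cup\overline{h(C_-)}$ we may assume $y\in\overline{h(C_+)}$. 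Completeness of $X$ gives $m:=\inf_X C_+\in\overline{C_+}$, so $h(m)\in\overline{h(C_+)}$; but $h(m)$ is a lower bound of the chain $h(C_+)$ and so is $y$, and both lie in $\overline{h(C_+)}$, whence $y=h(m)\in h(X)$ by (2). If instead $y\in\overline{h(C_-)}$, I would run the same argument with $\sup_X C_-$ and upper bounds.

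The real work — and the place where completeness is genuinely indispensable — is the construction of the chain $C$. Starting from a net $h(x_\alpha)\to y$, I would look at the closed subsemilattices $A:=h^{-1}({\uparrow}y)$ and $A':=h^{-1}({\downarrow}y)$ of $X$ (closed by (1); subsemilattices since $h(X)$ is). The key point to establish is the \emph{one-sided accumulation} $y\in\overline{h(A)}\cup\overline{h(A')}$, and this is where I expect the main obstacle: it must somehow combine the subsemilattice structure of $h(X)$ with the completeness of $X$ (note that a posteriori $y\in h(A)$, so the statement is in effect equivalent to the theorem). Once $y\in\overline{h(A)}$ is available, $C$ can be produced by transfinite descent inside $A$: keeping, along with each $c_\eta\in A$, a net in $A\cap{\downarrow}_X c_\eta$ whose $h$-image tends to $y$ (at stage $0$ obtained from a net in $A$ with $h$-image $\to y$ by multiplying through $c_0$), if $h(c_\eta)\ne y$ then by Hausdorffness some term $c_{\eta+1}$ of this net is $\ne c_\eta$, hence $<c_\eta$, hence still in $A$, and the invariant passes to $c_{\eta+1}$ because $h(c_{\eta+1})\ge y$ makes $h(c_{\eta+1})h(x_\alpha)\to h(c_{\eta+1})y=y$; at a limit $\lambda$ one puts $c_\lambda:=\inf_X\{c_\xi:\xi<\lambda\}$, which exists by completeness, lies in $A$ as $A$ is closed, and is strictly below all earlier terms. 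This recursion cannot go on through all ordinals, so it stops, and it can only stop at a point of $A$ mapping to $y$. The case $y\in\overline{h(A')}$ would be handled dually, the role of the infima being played by suprema of suitable bounded subsets (which exist because $X$ is complete); sorting out this dual step, and above all the one-sided accumulation statement, are the points I expect to require the most care.
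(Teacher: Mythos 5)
Your proposal has a genuine gap at its centre, and you have in fact flagged it yourself. Everything you actually prove --- the closedness of ${\uparrow}y$ and ${\downarrow}y$, the uniqueness of a lower (or upper) bound of a chain lying in the closure of that chain, the reduction of the theorem to finding a non-empty chain $C\subset X$ with $y\in\overline{h(C)}$, and the transfinite descent inside $A=h^{-1}({\uparrow}y)$ once $y\in\overline{h(A)}$ is granted --- is correct, but it is only the routine outer layer of the argument. The entire content of the theorem sits in the ``one-sided accumulation'' statement $y\in\overline{h(A)}\cup\overline{h(A')}$, for which you offer no argument and which, as you observe, is a posteriori equivalent to the conclusion (since $h^{-1}(y)\subset A$). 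A net $h(x_i)\to y$ gives no a priori reason why any of the $x_i$, or anything derived from them, should map into ${\uparrow}y\cup{\downarrow}y$: manufacturing points of $h(X)$ that are \emph{comparable} with $y$ and still accumulate at $y$ is precisely where joint continuity and completeness have to interact, and that step is absent.

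To see what is required, compare with the proof of Theorem~\ref{t:Tdelta} in this paper (the $\vv{T}_{2\delta}$ analogue of the statement): there one inductively chooses points $x_n\in X$ near the limit point so that all cumulative products $x_i\cdots x_n$ and $x_i\cdots x_nx$ stay inside prescribed closed neighborhoods $U_i$; completeness then produces the infima of the decreasing product-chains and the supremum of the resulting increasing chain, which lands in $\bigcap_iU_i$ and is, by construction, comparable with the relevant points. In the Hausdorff topological setting of Theorem~\ref{t:kc} an analogous product construction, now exploiting joint continuity (shrink a neighborhood $U$ of $y$ to $V$ with $VV\subset U$, so that the product of two preimages of points of $V\cap h(X)$ maps back into $U$ while lying below each factor), is what creates a chain whose image accumulates at $y$. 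Without some such construction your central claim is unsupported, so the proposed proof is not complete.
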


Theorems~\ref{t:cf} and \ref{t:kc} motivate the following (still) open problem.

\begin{problem}\label{prob:main} Assume that $h:X\to Y$ is a continuous homomorphism from a complete topologized semilattice $X$ to a Hausdorff semitopological semilattice $Y$. Is $h(X)$ closed in $Y$?
\end{problem}

In \cite{BBseq} the first two authors authors gave the following partial answer to Problem~\ref{prob:main}.

\begin{theorem}[Banakh, Bardyla] For any continuous homomorphism $h:X\to Y$ from a complete topologized semilattice $X$ to a sequential Hausdorff semitopological semilattice $Y$, the image $h(X)$ is closed in $Y$.
\end{theorem}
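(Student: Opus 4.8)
The plan is to exploit the sequentiality of $Y$ to reduce the statement to sequences, and then to build a preimage of the limit point by an iterated $\inf$--$\sup$ construction inside the complete semilattice $X$. Since $Y$ is sequential, a subset of $Y$ is closed as soon as it is sequentially closed, so it suffices to show that whenever $(x_n)_{n\in\w}$ is a sequence in $X$ with $h(x_n)\to y$ in $Y$, the point $y$ lies in $h(X)$.

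Fix such a sequence. For $n\le m$ put $x_{[n,m]}:=x_nx_{n+1}\cdots x_m$. For each $n$ the family $\{x_{[n,m]}:m\ge n\}$ is a decreasing chain (because $x_{[n,m+1]}=x_{[n,m]}x_{m+1}\le x_{[n,m]}$), so the completeness of $X$ yields its infimum $p_n:=\inf\{x_{[n,m]}:m\ge n\}$, which lies in the closure $\overline{\{x_{[n,m]}:m\ge n\}}$. Since in a semilattice the operation is the meet, a short check gives $p_n=x_np_{n+1}$, so $(p_n)_{n\in\w}$ is an increasing chain; completeness again provides $p:=\sup\{p_n:n\in\w\}\in\overline{\{p_n:n\in\w\}}$. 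The goal is to prove $h(p)=y$; since the semilattice order is a genuine partial order, it is enough to establish the two inequalities $h(p)\le y$ and $y\le h(p)$.

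The first inequality is routine. As $h$ is a continuous homomorphism, $h(x_{[n,m]})=h(x_n)\cdots h(x_m)$ and $h(p_n)\le h(x_{[n,m]})\le h(x_m)$ for all $m\ge n$. The set ${\uparrow}h(p_n)=\{t\in Y:h(p_n)t=h(p_n)\}$ is the preimage of the closed point $h(p_n)$ under the continuous self-map $t\mapsto h(p_n)t$ of $Y$, hence it is closed; since it contains $h(x_m)$ for all $m\ge n$ and $h(x_m)\to y$, it contains $y$, i.e. $h(p_n)\le y$. Likewise ${\downarrow}y=\{t\in Y:ty=t\}$ is closed, being the equalizer of $t\mapsto ty$ and the identity map of the Hausdorff space $Y$; it contains every $h(p_n)$, and since $h$ is continuous we have $h(p)\in\overline{\{h(p_n):n\in\w\}}\subseteq{\downarrow}y$, so $h(p)\le y$.

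The reverse inequality $y\le h(p)$ is the heart of the matter, and I expect it to be the main obstacle. Since each $h(p_n)\le h(p)$ and ${\downarrow}h(p)$ is closed, it would suffice to show $y\in\overline{\{h(p_n):n\in\w\}}$. Here is where I would use the remaining information: the membership $h(p_n)\in\overline{\{h(x_{[n,m]}):m\ge n\}}$ (the closure of a decreasing chain), the convergence of the ``diagonal'' $h(x_m)=h(x_{[m,m]})\to y$, and the sequentiality of $Y$, which should let one replace these chain-closures by genuine sequential limits and then run a diagonal argument identifying $y$ as a cluster point of $\{h(p_n):n\in\w\}$. Once $y\in\overline{\{h(p_n):n\in\w\}}$ is known, we get $y\le h(p)$, hence $h(p)=y\in h(X)$, so $h(X)$ is sequentially closed and therefore closed. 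This last step is exactly the place where plain Hausdorffness of $Y$ is not known to be sufficient (this being the content of the still-open Problem~\ref{prob:main}), so the sequential structure of $Y$ has to be used here in an essential way rather than merely for the initial reduction.
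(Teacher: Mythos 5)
Your proposal is not complete: everything you actually prove is the easy half, and the step that carries the entire content of the theorem is left as a declared intention. The reduction to sequential closedness, the construction of $p_n=\inf\{x_n\cdots x_m:m\ge n\}$ and $p=\sup\{p_n:n\in\w\}$ via completeness, and the verification that $h(p)\le y$ (using that ${\uparrow}h(p_n)$ and ${\downarrow}y$ are closed by separate continuity and Hausdorffness) are all correct, and this inf--sup skeleton is indeed the same machinery the authors use (compare the chains $C_i$, $D_i$ in the proof of Claim~\ref{cl1}). But the inequality $y\le h(p)$ --- which you yourself identify as ``the heart of the matter'' and as the precise point where Problem~\ref{prob:main} lives --- is only gestured at with ``should let one \dots run a diagonal argument.'' No such argument is given, so the proof does not exist yet.

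Moreover, the route you sketch for closing the gap has a concrete obstruction. You want to show $y\in\overline{\{h(p_n):n\in\w\}}$ by ``replacing chain-closures by genuine sequential limits.'' Two problems: first, $p_n$ lies in the closure of the chain $\{x_n\cdots x_m:m\ge n\}$ inside $X$, and $X$ is an arbitrary complete topologized semilattice, not a sequential one, so there is no sequence from that chain converging to $p_n$; second, even in $Y$, sequentiality only says that sequentially closed sets are closed --- it does \emph{not} say that every point of $\overline{A}$ is a limit of a sequence from $A$ (that is the strictly stronger Fr\'echet--Urysohn property). Consequently a naive diagonal extraction is unavailable, and the actual proof in the cited paper has to iterate sequential closures transfinitely and interleave this with the separate continuity of the shifts $t\mapsto at$ applied to the convergence $h(x_n)\to y$. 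Until that step is supplied, what you have is a correct reduction plus the trivial inequality $h(p)\le y$, not a proof that $h(p)=y$.
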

  
In this paper we shall show that the answer to Problem~\ref{prob:main} is affirmative under the additional condition that the semitopological semilattice $Y$ satisfies the separation axiom $\vec{T}_{2\delta}$, which is stronger that the Hausdorff axiom $T_2$ and weaker than the axioms $T_3$ of regularity and $\vv{T}_{3\frac12}$ of functional Hausdorffness.

We recall that a topological space $X$ is defined to satisfy the separation axiom
\begin{itemize}
\item $T_1$ if for any distinct points $x,y\in X$ there exists an open set $U\subset X$ such that $x\in U$ and $y\notin U$;
\item $T_2$ if for any distinct points $x,y\in X$ there exists an open set $U\subset X$ such that $x\in U\subset\bar U\subset X\setminus\{y\}$;
\item $T_3$ if $X$ is a $T_1$-space and for any open set $V\subset X$ and point $x\in V$ there exists an open set $U\subset X$ such that $x\in U\subset\bar U\subset V$;
\item $T_{3\frac12}$ if $X$ is a $T_1$-space and for any open set $V\subset X$ and point $x\in V$ there exists a continuous function $f:X\to[0,1]$ such that $x\in f^{-1}([0,1))\subset V$.
\end{itemize}
In these definitions by $\bar A$ we denote the closure of a subset $A$ in a topological space.

Topological spaces satisfying a separation axiom $T_i$ are called {\em $T_i$-spaces}; $T_2$-spaces are called {\em Hausdorff}.
Now we define a new separation axiom.

\begin{definition} A topological space $X$ is defined to satisfy the separation axiom
\begin{itemize}
\item $T_{2\delta}$ if $X$ is a $T_1$-space and for any open set $U\subset X$ and point $x\in U$ there exists a countable family $\U$ of closed neighborhoods of $x$ in $X$ such that $\bigcap\U\subset U$.
\end{itemize}
\end{definition}
We shall say that a topological space $X$ satisfies the separation axiom 
\begin{itemize}
\item $\vv{T}_i$ for $i\in\{1,2,2\delta,3,3\frac12\}$ if $X$ admits a bijective continuous map $X\to Y$ to a $T_i$-space $Y$.
\end{itemize}

The following diagram describes the implications between the separation axioms $T_i$ and $\vv{T}_i$ for $i\in\{1,2,3,3\frac12\}$.
$$\xymatrix{
T_{3\frac12}\ar@{=>}[r]\ar@{=>}[d]&T_3\ar@{=>}[r]\ar@{=>}[d]&T_{2\delta}\ar@{=>}[r]\ar@{=>}[d]&T_2\ar@{=>}[r]\ar@{<=>}[d]&T_1\ar@{<=>}[d]\\
\vv{T}_{3\frac12}\ar@{=>}[r]&\vv{T}_3\ar@{=>}[r]&\vv{T}_{2\delta}\ar@{=>}[r]&\vv{T}_2\ar@{=>}[r]&\vv{T}_1
}
$$
Observe that a topological space $X$ satisfies the separation axiom $\vv{T}_{3\frac12}$ if and only if it is {\em functionally Hausdorff} in the sense that for any distinct points $x,y\in X$ there exists a continuous function $f:X\to\mathbb R$ with $f(x)\ne f(y)$. So, each functionally Hausdorff space is a $\vv{T}_{2\delta}$-space.

The following theorem gives a partial answer to Problem~\ref{prob:main} and is the main result of this paper.

\begin{theorem}\label{main} For any continuous homomorphism $h:X\to Y$ from a complete topologized semilattice $X$ to a semitopological semilattice $Y$ satisfying the separation axiom $\vv{T}_{2\delta}$, the image $h(X)$ is closed in $Y$.
\end{theorem}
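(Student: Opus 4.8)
The plan is to reduce Theorem~\ref{main} to the closedness of the partial order $P=\{(x,y)\in h(X)\times h(X):xy=x\}$ inside $Y\times Y$, and then to prove that closedness by exploiting completeness through transfinite (actually countable, using $\vv{T}_{2\delta}$) approximations. First I would replace $X$ by its image $h(X)$: since a continuous homomorphic image of a complete topologized semilattice is complete (the chains of $h(X)$ are, up to the homomorphism, images of chains of $X$, and $h$ being continuous sends closures into closures), it suffices to treat the case where $X\subseteq Y$ is a complete subsemilattice of a $\vv{T}_{2\delta}$ semitopological semilattice $Y$ and to show $X$ is closed in $Y$. Here I would use the bijective continuous map $Y\to Y'$ onto a genuine $T_{2\delta}$-space witnessing $\vv{T}_{2\delta}$; this map is a continuous isomorphism of semitopological semilattices, it does not change the underlying semilattice or which subsets are closed in the domain, so I may assume outright that $Y$ itself is a $T_{2\delta}$ semitopological semilattice. (In particular $Y$ is Hausdorff, so that singletons and, via separate continuity, sets like ${\uparrow}y$ and ${\downarrow}y$ are closed.)

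The core is to show that $P=\{(x,y)\in X\times X:xy=x\}$ is closed in $Y\times Y$; then, since $X$ is a complete semilattice it has a smallest element, or more robustly one argues: if $P$ is closed in $Y\times Y$ then the diagonal-type section gives that $X=\{x\in Y: (x,x)\in \overline P\cap(Y\times Y)\}$... more carefully, one shows $X$ closed from closedness of $P$ by the standard trick used in \cite{BBm}: for $z\in\bar X$, separate continuity gives that the net $xz$ (for $x$ in a net in $X$ converging to $z$) relates $z$ to points of $X$, and closedness of $P$ forces $z\in X$. So the real content is: \emph{take a point $(a,b)\in \overline{P}\setminus P$ in $Y\times Y$ and derive a contradiction.} Choose nets $(x_\alpha),(y_\alpha)$ in $X$ with $x_\alpha y_\alpha=x_\alpha$, $x_\alpha\to a$, $y_\alpha\to b$. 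Passing to the sub-semilattice generated and using completeness, I would extract a chain: consider $c_\alpha := x_\alpha y_\beta$-type elements and take infima/suprema of chains in $X$, which lie in $\bar{}{}$-closures and hence (by completeness) back in $X$. The aim is to produce, inside $X$, an element $p$ with $p\le a$ in a suitable limiting sense and $p$ "detecting" that $a\notin{\uparrow}b\cap X$ or $a\notin X$, and then to separate $p$ from the relevant closed set.

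The main obstacle — and the place where $\vv{T}_{2\delta}$ rather than mere Hausdorffness is needed — is that separate continuity only lets us control one variable at a time, so a single Hausdorff separation is not enough to trap a limit point; instead one must iterate countably many closed-neighborhood separations. Concretely: if $a\notin X$ (the case to rule out), completeness of $X$ lets me build a decreasing (or increasing) chain $C=\{p_n:n\in\w\}\subset X$ whose inf/sup, call it $p$, lies in $X$ but whose relation to $a$ is "just barely" violated; using $T_{2\delta}$ I pick a countable family $\U$ of closed neighborhoods of $p$ (or of the relevant point) with $\bigcap\U$ inside a prescribed open set avoiding $a$, then use separate continuity of multiplication to pull each $U\in\U$ back to a closed neighborhood condition on the net, and a diagonal argument across the countably many constraints produces a contradiction with $x_\alpha y_\alpha=x_\alpha$ and $x_\alpha\to a$. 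The delicate bookkeeping is making the countable family of neighborhoods interact correctly with a possibly uncountable net — handled by replacing the net with a sequence extracted along the chain $C$, which is legitimate precisely because $C$ is a chain and $X$ is complete (so we stay inside $X$ at each limit). I expect the chain-extraction-plus-diagonalization step to be the technical heart; the reduction to $P$ and the passage through the bijective map are routine.
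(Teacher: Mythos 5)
Your high-level architecture coincides with the paper's (reduce to a complete subsemilattice $X\subseteq Y$, prove the partial order $P$ is closed in $Y\times Y$, then use the smallest element of $X$ to write $X=\{x\in Y:(s,x)\in P\}$), but the three steps are not actually carried out, and two of your sketched justifications are wrong as stated. First, your reduction to $X\subseteq Y$ rests on the claim that chains of $h(X)$ are ``images of chains of $X$''; this is false in general, since the preimage of a chain need not be a chain. The paper instead applies the lemma that every non-empty subsemilattice of a complete topologized semilattice has an infimum in its closure to the preimage semilattices $h^{-1}({\uparrow}c)$, shows $h(\inf h^{-1}({\uparrow}c))=c$, and only then obtains a genuine chain $\{\inf h^{-1}({\uparrow}c):c\in C\}$ in $X$ whose inf and sup push forward. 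Second, the passage from $\vv{T}_{2\delta}$ to $T_{2\delta}$ is not routine: the witnessing bijection $Y\to Y'$ lands in a space that is a priori only a topological space, not a semitopological semilattice, so you cannot ``assume outright'' that $Y$ is $T_{2\delta}$. The paper has to take the initial (weakest) topology generated by all continuous maps to $T_{2\delta}$-spaces, and uses that this class is closed under subspaces and products to see that the shifts remain continuous in the weaker topology.

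The most serious gap is in the core. You never produce an argument that $P$ is closed; you gesture at ``a diagonal argument across the countably many constraints'' and assert that the induction is ``actually countable.'' It is not. The $T_{2\delta}$ axiom gives countable families of closed neighborhoods refining a single open set, but the final step of the paper's proof pins down the points $x$ and $y$ as $\{x\}=\bigcap\U_x$ and $\{y\}=\bigcap\U_y$ where $\U_x,\U_y$ are the families of \emph{all} closed neighborhoods, which have arbitrary cardinality; this forces a transfinite induction over all infinite cardinals $\kappa$, with statement $(*_\kappa)$ asserting that any families of closed neighborhoods of size $\le\kappa$ trap comparable points of $X$. The countable case is itself a nontrivial inductive construction of sequences $(x_n),(y_n)$ with $x_n\le y_n$ whose partial products stay in prescribed interiors (using separate continuity at each step), followed by taking $\inf$ of the chains $\{x_i\cdots x_n:n\ge i\}$ and then $\sup$ of the resulting increasing chain of infima, all of which land back in $X$ by completeness. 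The uncountable step uses $T_{2\delta}$ to convert each of the $<\kappa$ many single-neighborhood constraints into countable families and then invokes $(*_\lambda)$ for $\lambda<\kappa$. None of this bookkeeping is present in your sketch, and your proposal to ``replace the net with a sequence extracted along the chain $C$'' does not address the uncountable intersection needed at the end.
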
 

Theorem~\ref{main} will be proved in Section~\ref{s:m} after some preliminary work made in Sections~\ref{s:t}. As a by-product of the proof we obtain the  
following useful result.

\begin{theorem}\label{t:order} For any complete semitopological semilattice $X$ satisfying the separation axiom $\vv{T}_{2\delta}$, the partial order $\le:=\{(x,y)\in X\times X:xy=x\}$ of $X$ is a closed subset of $X\times X$.
\end{theorem}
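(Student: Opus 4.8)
The plan is to prove that the complement $(X\times X)\setminus{\le}$ is open. So fix $a,b\in X$ with $ab\ne a$, put $c:=ab$ (so that $c<a$ in the natural order of $X$), and assume for contradiction that $(a,b)\in\overline{\le}$. Before invoking completeness I would record a few facts valid in any Hausdorff semitopological semilattice --- and $X$ is Hausdorff, since $\vv{T}_{2\delta}$ implies $T_2$: for every $x\in X$ the set ${\uparrow}x=\{y:xy=x\}$ is closed, being the preimage of the closed singleton $\{x\}$ under the continuous shift $y\mapsto xy$; the set ${\downarrow}x=\{y:xy=y\}$ is closed, being the equalizer of the continuous maps $y\mapsto xy$ and $y\mapsto y$ into a Hausdorff space; and consequently the closure of any chain in $X$ is again a chain. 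Completeness then yields one more crucial fact: every monotone chain in $X$ converges to its supremum (respectively its infimum), which lies in the closure of the chain --- indeed, if an increasing chain $C$ failed to converge to $s:=\sup C$, there would be an open $U\ni s$ for which the cofinal sub-chain $\{x\in C: x\notin U\}$ still has supremum $s$, forcing $s\in\overline{C\setminus U}\subseteq X\setminus U$, a contradiction.

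Next I would extract from $\vv{T}_{2\delta}$ two countable families of closed neighbourhoods. Fix a continuous bijection $f\colon X\to Z$ onto a $T_{2\delta}$-space $Z$. Since $f(c)\ne f(a)$, applying the $T_{2\delta}$ axiom of $Z$ to the point $f(a)$ and the open set $Z\setminus\{f(c)\}$ and pulling back along $f$ gives a decreasing sequence $(F_n)_{n\in\w}$ of closed neighbourhoods of $a$ in $X$ with $c\notin\bigcap_{n\in\w}F_n$; symmetrically one obtains a decreasing sequence $(G_n)_{n\in\w}$ of closed neighbourhoods of $c$ with $a\notin\bigcap_{n\in\w}G_n$. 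Since the shift $\ell_a\colon y\mapsto ay$ is continuous with $\ell_a(a)=a$ and $\ell_a(b)=c$, the sets $\ell_a^{-1}(F_n)$ and $\ell_a^{-1}(G_n)$ are neighbourhoods of $a$ and of $b$, respectively; so, using that $(a,b)\in\overline{\le}$ meets every neighbourhood of $(a,b)$, for each $n$ one may choose $x_n\le y_n$ with $u_n:=ax_n\in F_n$ and $v_n:=ay_n\in G_n$. Then $u_n\le v_n$ and $u_n,v_n\le a$, so the whole construction takes place inside the closed subsemilattice ${\downarrow}a$, which has $a$ as top element and is again complete and $\vv{T}_{2\delta}$. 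The goal is then to assemble from $(u_n)$ and $(v_n)$ --- via an induction on $n$ producing a monotone chain, whose limit $p$ exists by completeness --- a single point $p$ that on the one hand lies in $\bigcap_{n\in\w}F_n$ (so $p\ne c$) and on the other hand is pinned to $c$ by applying separate continuity of the multiplication, one coordinate at a time, to the relations $u_n\le v_n$ together with the shrinking of $(F_n)$ and $(G_n)$. This contradiction proves that $\le$ is closed in $X\times X$.

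The hard part is exactly this last step. The hypothesis $(a,b)\in\overline{\le}$ supplies only \emph{matched} pairs $x_n\le y_n$ near $(a,b)$, and because the semigroup operation is merely separately continuous one cannot pass to the joint limit of $(x_n,y_n)$; this is the whole reason the argument needs an $\w$-indexed reservoir of closed neighbourhoods (which is what $\vv{T}_{2\delta}$, but not $T_2$, guarantees) over which to run an induction, and the whole reason it needs completeness, namely to form the limit $p$ of the constructed chain and to know that this limit is approached. Arranging that the two countable families $(F_n)$ and $(G_n)$ interact correctly under the separately continuous product --- so that $p$ is forced simultaneously into $\bigcap_{n\in\w}F_n$ and onto $c$ --- is where the bulk of the work lies, and is where the preliminary results of Section~\ref{s:t} are used; for a general Hausdorff $Y$ no such neighbourhood families exist, which is why the corresponding question, Problem~\ref{prob:main}, remains open.
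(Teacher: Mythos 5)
Your write-up stops precisely where the proof has to start. Everything you actually carry out (closedness of ${\uparrow}x$ and ${\downarrow}x$, convergence of monotone chains to their suprema and infima, extraction of the countable families $(F_n)$ and $(G_n)$, the choice of matched pairs $x_n\le y_n$) is routine; the theorem lives entirely in the step you defer with ``the hard part is exactly this last step.'' Two things are missing there, and the second is not repairable within your setup. First, choosing $x_n$ with $ax_n\in F_n$ independently for each $n$ gives no control over the iterated products $u_iu_{i+1}\cdots u_n$, which is what you need in order to form chains whose infima (and then the supremum of those infima) remain inside $\bigcap_{n}F_n$: the point $x_n$ must be chosen at stage $n$ inside a neighborhood shrunk according to \emph{all} previously formed partial products, using separate continuity of each shift $z\mapsto u_i\cdots u_{n-1}z$ --- this is exactly the role of the inductive conditions $(2_n)$, $(3_n)$ in Claim~\ref{cl1} of the paper, and it is the technical heart of the argument, not a detail one can wave at.

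Second, and more fundamentally, your intended contradiction does not close. A countable family $(F_n)$ of closed neighborhoods of $a$ with $c\notin\bigcap_nF_n$ can only certify that the constructed limit point $p$ is \emph{different from} $c$; nothing in separate continuity, in the relations $u_n\le v_n$, or in the family $(G_n)$ (whose intersection merely avoids $a$) can ``pin $p$ to $c$'': from $p\le q$, $p,q\le a$, $p\ne c$, $q\ne a$ no contradiction follows in a general semilattice. This is precisely why the paper does not argue by separating $a$ from the single point $c$: it proves the statement $(*_\kappa)$ of Theorem~\ref{t:Tdelta} by transfinite induction over families of closed neighborhoods of \emph{arbitrary} cardinality (using Lemma~\ref{l:smallest}, that every nonempty subsemilattice --- not merely every chain --- of a complete topologized semilattice has an infimum in its closure), and then applies it to the family of \emph{all} closed neighborhoods of $x$ and of $y$, whose intersections are exactly $\{x\}$ and $\{y\}$ by Hausdorffness; only this forces the constructed points to equal $x$ and $y$ and hence yields $x\le y$. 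If you want to keep the contradiction in the form you propose, you still need that full transfinite machinery to conclude that the point you build is $a$ itself, not merely that it avoids $c$; as it stands the countable reduction buys you nothing beyond the base case of the paper's induction.
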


\begin{remark} The completeness of $X$ is essential in Theorem~\ref{t:order}: by \cite{BBR2}, there exists a metrizable semitopological semilattice $X$ whose partial order is not closed in $X\times X$, and for every $x\in X$ the upper set ${\uparrow}x$ is finite.
\end{remark}

\section{The closedness of the partial order of a semitopological semilattice}\label{s:t}

In this section we shall prove Theorem~\ref{t2} implying Theorem~\ref{t:order}. In the proof we shall use the following lemma. In this lemma we identify cardinals with the smallest ordinals of a given cardinality.

\begin{lemma}\label{l:smallest} Each non-empty subsemilattice $S$ of a complete topologized semilattice $X$ has $\inf S\in\bar S$.
\end{lemma}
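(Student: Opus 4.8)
The plan is to produce the infimum of $S$ as the infimum of a suitable chain inside $\bar S$, using Zorn's Lemma to find that chain and using completeness of $X$ to guarantee that its infimum lies in the closure. First I would observe that the closure $\bar S$ of a subsemilattice $S$ in a topologized semilattice $X$ need not be a subsemilattice when the multiplication is only separately continuous, so I will work with $S$ itself as much as possible and invoke completeness of $X$ (not of $\bar S$) at the end. Consider the family $\mathcal C$ of all non-empty chains $C\subset S$, ordered by inclusion. This family is non-empty (any singleton is a chain) and is closed under unions of subchains (the union of a chain of chains is a chain), so by Zorn's Lemma it contains a maximal element $C$. By completeness of $X$, the point $i:=\inf C$ exists and belongs to $\bar C\subset\bar S$. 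It remains to show that $i=\inf S$, i.e. that $i\le s$ for every $s\in S$.

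The key step is the verification that $i$ is a lower bound for all of $S$, and this is where maximality of $C$ is used. Fix $s\in S$ and consider the set $sC:=\{sc:c\in C\}\subset S$. Since $S$ is a subsemilattice and $C$ is a chain, $sC$ is again a chain in $S$, and moreover $sc\le c$ for each $c\in C$, so $sC$ lies below $C$. I would next argue that $C\cup sC$ is a chain: for $c,c'\in C$ we have $sc\cdot c' = s(cc')\in\{sc,sc'\}$ since $cc'\in\{c,c'\}$, so any two elements of $sC$ are comparable, and $sc\cdot c'' = s(cc'')$ with $cc''\in\{c,c''\}$ shows $sc$ is comparable to $c''$ as well; hence $C\cup sC\in\mathcal C$ and $C\subset C\cup sC$. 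By maximality, $C\cup sC=C$, so in particular $sc\in C$ for every $c\in C$.

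Now I would use this to show $i\le s$, i.e. $is=i$. The map $\mu_s\colon X\to X$, $\mu_s(x)=xs$, is continuous because $X$ is a semitopological semilattice, and it maps $C$ into $C$ (just shown) with $\mu_s(c)\le c$. Since $i=\inf C\in\bar C$ and $\mu_s$ is continuous, $\mu_s(i)=is\in\overline{\mu_s(C)}=\overline{sC}$. But $sC\subset C$ and $i$ is a lower bound of $C$, hence a lower bound of $sC$; I would argue that $is$ is also a lower bound of $C$: for $c\in C$, $sc\in C$ and $i\le sc$, so $i(sc)=i$, and then $(is)c = i(sc) = i \le \dots$ — more directly, $is\le sc\le c$ for all $c$ (using $i\le sc$), so $is$ is a lower bound of $C$ below or equal to $i=\inf C$; combined with $is\le s\cdot(\text{anything})$... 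The cleanest route: $is$ is a lower bound of $C$ because $is\cdot c = i\cdot(sc)$ and $sc\in C$ gives $i\cdot(sc)=i$ only after knowing $i\le sc$, which holds since $i=\inf C\le sc\in C$; thus $is\cdot c=i$ for all $c\in C$. Fixing any $c_0\in C$ and letting, if necessary, $i$ play the role via $\inf C\le i$ trivially, we get $is\le i$; conversely $i\le is$ would follow from $i$ being the infimum once we know $is$ lies in the lower cone, but in a semilattice $\inf C$ is the greatest lower bound, so $is\le i$ and $i$ being a lower bound of $\{is\}$... I would finish by noting $is$ is a lower bound of $C$ and $i=\inf C$ is the greatest such, so $is\le i$; and $is\le s$ trivially, while $i\le is$ since $i\cdot(is)=( i\cdot i)s = is$... wait, that gives $i\le is$, so $i=is$ and hence $i\le s$. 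Therefore $i\le s$ for all $s\in S$, and since $i\in\bar S$ we conclude $\inf S$ exists and equals $i\in\bar S$.

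The main obstacle I anticipate is precisely the bookkeeping in the last paragraph: one must be careful that $\bar S$ is not assumed to be a semilattice, so all algebraic manipulations ($sc\in C$, $is=i$, etc.) must be carried out with points of $S$ (or $C$), and continuity is invoked only to pass the single point $i$ and its image $is$ into the relevant closures. Once it is established that $C\cup sC=C$ by maximality, the identity $is=i$ reduces to the defining property of $\inf C$ as greatest lower bound together with $is\le c$ for all $c\in C$, which follows from $i\le sc\in C$ and $sc\le c$.
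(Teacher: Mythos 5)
Your argument breaks down at the key step: the claim that $C\cup sC$ is a chain is false, and with it the entire maximality argument. The computation $(sc)c'=s(cc')$ shows $sc\le c'$ only in the case $cc'=c$; in the other case $cc'=c'$ it gives $(sc)c'=sc'$, which need not belong to $\{sc,c'\}$, so $sc$ and $c'$ may be incomparable. Worse, the conclusion you are aiming for is itself false: the infimum of a maximal chain in $S$ need not be $\inf S$. Concretely, let $X=\mathcal P(\omega+1)$ with the operation of intersection and the topology of $\{0,1\}^{\omega+1}$ (a compact topological semilattice, hence complete), let $a_n=\{n,n+1,\dots\}\cup\{\omega\}$, let $b$ be the set of even numbers, and let $S$ be the subsemilattice generated by $\{b\}\cup\{a_n:n\in\omega\}$, so that $S$ consists of the $a_n$, the sets $e_n:=b\cap a_n$ and $b=e_0$. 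Then $C=\{a_n:n\in\omega\}$ is a maximal chain in $S$ (for $n>m+1$ the sets $e_m$ and $a_n$ are incomparable, since $\omega\in a_n\setminus e_m$ and $e_m$ contains an even number smaller than $n$), yet $\inf C=\{\omega\}$ while $\inf S=\emptyset$. Here $sC=bC=\{e_n\}$ is disjoint from $C$ and $C\cup sC$ is not a chain, exactly as the faulty step would require. A secondary problem: the lemma concerns a complete \emph{topologized} semilattice, with no continuity assumption on the multiplication, so the continuity of the translation $\mu_s$ that you invoke at the end is not available --- and the paper needs the lemma in precisely this generality, applying it to the subsemilattices $h^{-1}({\uparrow}c)$ of a topologized semilattice $X$.

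The idea that is missing is that one must take infima of \emph{all} partial products of $S$ along a well-ordering, not of a single maximal chain of elements of $S$. The paper argues by transfinite induction on the cardinality $\kappa=|S|$: writing $S=\{x_\alpha\}_{\alpha\in\kappa}$ and letting $S_\beta$ be the subsemilattice generated by $\{x_\alpha:\alpha\le\beta\}$, the inductive hypothesis yields $\inf S_\beta\in\bar S_\beta\subset\bar S$; since the family $(S_\beta)_{\beta\in\kappa}$ is increasing, the points $\inf S_\beta$ form a chain in $\bar S$, whose infimum lies in $\bar S$ by completeness and is easily seen to be a greatest lower bound of all of $S$ (each $x_\alpha$ lies in some $S_\beta$, and every lower bound of $S$ is a lower bound of every $S_\beta$). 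Your Zorn's Lemma setup cannot be repaired to reach all of $S$, because maximal chains simply do not see elements such as $b$ in the example above.
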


\begin{proof} By transfinite induction, we shall prove that for any non-zero cardinal $\kappa$ the following condition holds:
\begin{itemize}
\item[$(*_\kappa)$] each non-empty semilattice $S\subset X$ of cardinality $|S|\le\kappa$ has $\inf S\in \bar S$.
\end{itemize}

For any finite cardinal $\kappa$ the statement $(*_\kappa)$ is true (as each finite semilattice contains the smallest element). Assume that for some infinite cardinal $\kappa$ and any non-zero cardinal $\lambda<\kappa$ the statement $(*_\lambda)$ has been proved. To prove the statement $(*_\kappa)$, choose any subsemilattice $S\subset X$ of cardinality $|S|=\kappa$. Write $S$  as $S=\{x_\alpha\}_{\alpha\in\kappa}$. For every  ordinal $\beta\in\kappa$ consider the subsemilattice $S_\beta$ generated by the set $\{x_\alpha:\alpha\le\beta\}$. Since $\lambda:=|S_\beta|\le|\w+\beta|<\kappa$, by the condition $(*_\lambda)$, the semilattice $S_\beta$ has $\inf S_\beta\in\bar S_\beta\subset \bar S$. For any ordinals $\alpha<\beta$ in $\kappa$ the inclusion $S_\alpha\subset S_\beta$ implies $\inf S_\beta\le\inf S_\alpha$. By the $k$-completeness of $X$, the chain $C=\{\inf S_\alpha:\alpha\in\kappa\}\subset \bar S$ has $\inf C\in\bar C\subset  \bar S$. We claim that $\inf C=\inf S$. Indeed, for any $\alpha\in\kappa$ we get $\inf C\le\inf S_\alpha\le x_\alpha$, so $\inf C$ is a lower bound for the set $S$ and hence $\inf C\le\inf S$. On the other hand, for any lower bound $b$ of $S$ and every $\alpha\in\kappa$, we get $S_\alpha\subset S\subset{\uparrow}b$ and hence $b\le \inf S_\alpha$. Then $\inf C=\inf\{\inf S_\alpha:\alpha\in\kappa\}\ge b$ and finally $\inf C=\inf S$.
\end{proof}

The following theorem is technically the most difficult result of this paper.

\begin{theorem}\label{t:Tdelta} Let $X$ be a complete subsemilattice of a semitopological semilattice $Y$ satisfying the separation axiom ${T}_{2\delta}$. Then the partial order $P:=\{(x,y)\in X\times X:xy=x\}$ of $X$ is closed in $Y\times Y$.
\end{theorem}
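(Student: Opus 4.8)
The plan is to show that the complement $(Y\times Y)\setminus P$ is open, where here we should be careful: we actually want $P$ closed in $Y\times Y$, and since $X$ need not be closed in $Y$ a priori, we must prove that any pair $(a,b)\in Y\times Y$ lying in the closure of $P$ actually belongs to $P$ (in particular $a,b\in X$). Suppose toward a contradiction that $(a,b)\in\overline{P}\setminus P$. Using the $T_{2\delta}$ separation axiom in $Y$, I would first try to localize: the key point is that $T_{2\delta}$ gives, for any point and any open neighborhood, a countable family of closed neighborhoods whose intersection sits inside the open set. I expect the proof to build, by a careful recursive construction indexed by $\omega$, a decreasing chain (or two interlocking chains) of elements of $X$ whose infima/suprema are forced by completeness of $X$ to converge to witnesses that contradict $(a,b)\notin P$.

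More concretely, here is the strategy I would follow. Since $(a,b)\in\overline P$, there is a net $(x_i,y_i)\in P$ converging to $(a,b)$, so $x_iy_i=x_i$ for all $i$. The separate continuity of multiplication in $Y$ lets us control products like $ay_i$, $x_ib$, $ab$, etc. I would use Lemma~\ref{l:smallest} (and the completeness of $X$) to replace the net by something with better order structure: for instance, considering the subsemilattice of $X$ generated by a suitable countable subcollection of the $x_i$'s and $y_i$'s, together with relevant infima, one gets a countable subsemilattice $S\subset X$ with $\inf S\in\bar S\subset\bar X$. The role of $T_{2\delta}$ is precisely to make ``$\omega$-many closed neighborhoods'' suffice, so that a single countable subsemilattice $S$ captures enough of the topology around $(a,b)$ to derive the contradiction; this is where the axiom is used in an essential way (mere Hausdorffness would not let us get away with only countably much data).

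The technical heart — and the step I expect to be the main obstacle — is the recursive construction of the countable approximating set together with the bookkeeping needed to ensure that the infima produced by completeness land in the right closed neighborhoods. One has to interleave: (i) choosing closed neighborhoods $U_n$ of $a$ and $V_n$ of $b$ from the $T_{2\delta}$ families so that $\bigcap_n U_n$ and $\bigcap_n V_n$ are small; (ii) choosing $(x_n,y_n)\in P$ inside $U_n\times V_n$; (iii) closing off under the semilattice operation and under taking infima of the chains that appear, staying within $X$ by completeness; and (iv) arguing that the limit point $p=\inf\{x_n : n\in\omega\}$ (or an analogous sup/inf combination) satisfies $p\in\bar X$, $p$ relates correctly to $a$ and $b$ via separate continuity, and ultimately $pa=p$ or some such identity that forces $(a,b)\in P$. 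Separate continuity is delicate here because it only gives continuity in one variable at a time, so the construction must be arranged so that at each stage only one variable moves. Once the contradiction ``$(a,b)\in P$'' is reached, closedness of $P$ in $Y\times Y$ follows, and since $P\supset\{(x,x):x\in X\}$ one also gets (via the diagonal being closed, $Y$ being $T_2$) that $X$ is closed in $Y$, recovering Theorem~\ref{t:order} as the case $Y=X$.
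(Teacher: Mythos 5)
There is a genuine gap, and it lies exactly where you locate the ``main obstacle.'' Your plan is built entirely around a single recursion indexed by $\omega$, on the premise that $T_{2\delta}$ ``makes $\omega$-many closed neighborhoods suffice.'' That is not what the axiom gives you. $T_{2\delta}$ says that for each \emph{given} open set $U\ni a$ some countable family of closed neighborhoods of $a$ has intersection inside $U$; it does \emph{not} say that $\{a\}$ is the intersection of countably many closed neighborhoods of $a$ (the space need not have countable pseudocharacter). So your $\omega$-indexed construction, even carried out perfectly, produces a point $p\in X$ lying in countably many prescribed closed neighborhoods of $a$ (and a $q\ge p$ in countably many closed neighborhoods of $b$) --- but there is no way to conclude $p=a$ and $q=b$ from that, and without that identification you never reach ``$(a,b)\in P$.'' The witness $p$ depends on which countable family you chose, and different families give different witnesses; no single countable family pins down $a$.

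The paper closes this gap by proving, by transfinite induction on \emph{all} infinite cardinals $\kappa$, the statement $(*_\kappa)$: for any families $\U_a,\U_b$ of closed neighborhoods of $a,b$ of size $\le\kappa$ there are $x'\in X\cap\bigcap\U_a$ and $y'\in X\cap\bigcap\U_b$ with $x'\le y'$. Your recursion is essentially the base case $(*_\omega)$ (and there $T_{2\delta}$ is not even needed --- only separate continuity and completeness are used). The axiom $T_{2\delta}$ enters in the \emph{inductive step} for uncountable $\kappa$: at stage $\beta<\kappa$ one must choose $x_\beta,y_\beta$ satisfying $<\kappa$ many open constraints coming from all finite products $x_{\alpha_1}\cdots x_{\alpha_{n-1}}$ with indices below $\beta$, and $T_{2\delta}$ converts each such open constraint into countably many \emph{closed} neighborhoods so that the hypothesis $(*_\lambda)$ with $\lambda=|\omega+\beta|<\kappa$ applies. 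Completeness is then invoked at the level of arbitrary subsemilattices (Lemma~\ref{l:smallest}), not just $\omega$-chains, to extract $\inf C_\alpha$ for the $\kappa$-indexed families and then $\sup$ of the resulting $\kappa$-chains. Finally, Hausdorffness gives $\{a\}=\bigcap\U_a$ for $\U_a$ the family of \emph{all} closed neighborhoods of $a$, and $(*_\kappa)$ for $\kappa=\max\{|\U_a|,|\U_b|\}$ forces $x'=a$, $y'=b$. Without this transfinite bootstrap your argument cannot be completed; this is the missing idea, not merely a technical detail.
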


\begin{proof} Given two points $x,y\in Y$ we should prove that $(x,y)\in P$ if for any neighborhoods $O_x$ and $O_y$ of $x,y$ in $Y$ there are points $x'\in O_x\cap X$ and $y'\in O_y\cap X$ with $x'\le y'$. By transfinite induction, for every infinite cardinal $\kappa$ we shall prove the following statement:

\begin{itemize}
\item[$(*_\kappa)$] for any families $\U_x$, $\U_y$ of closed neighborhoods of $x$ and $y$ with $\max\{|\U_x|,|\U_y|\}\le\kappa$ there are points $x'\in X\cap\bigcap\U_x$ and $y'\in X\cap\bigcap\U_y$ such that $x'\le y'$.
\end{itemize}

\begin{claim}\label{cl1} The statement $(*_\w)$ holds.
\end{claim}

\begin{proof} Fix any countable families $(U_n)_{n\in\w}$ and $(V_n)_{n\in\w}$ of closed neighborhoods of the points $x,y$, respectively. Replacing each set $U_n$ by $\bigcap_{i\le n}U_i$, we can assume that $U_{n+1}\subset U_n$ for all $n\in\w$. By the same reason, we can assume that the sequence $(V_n)_{n\in\w}$ is decreasing. For every $n\in\w$ denote by $U_n^\circ$ and $V_n^\circ$ the interiors of the sets $U_n$ and $V_n$ in $Y$.

By induction we shall construct sequences $(x_n)_{n\in\w}$ and $(y_n)_{n\in\w}$ of points of $X$ such that for every $n\in\w$ the following conditions are satisfied:
\begin{enumerate}
\item[$(1_n)$] $x_n\le y_n$;
\item[$(2_n)$] $\{x_i\cdots x_n,x_i\cdots x_nx\}\subset U_i^\circ$ for all $i\le n$;
\item[$(3_n)$] $\{y_i\cdots y_n,y_i\cdots y_ny\}\subset V_i^\circ$ for all $i\le n$.
\end{enumerate}

To choose the initial points $x_0,y_0$, find neighborhoods $U'_0\subset U_0^\circ$ and $V'_0\subset V_0^\circ$ of $x$ and $y$ in $Y$ such that $U'_0x\subset U_0^\circ$ and $V'_0y\subset V_0^\circ$. By our assumption, there are points $x_0\in X\cap U'_0$ and $y_0\in X\cap V'_0$ such that  $x_0\le y_0$. The choice of the neighborhoods $U'_0$ and $V'_0$ ensures that the conditions $(2_0)$ and $(3_0)$ are satisfied.

Now assume that for some $n\in\IN$ points $x_0,\dots,x_{n-1}$ and $y_0,\dots,y_{n-1}$ of $X$ are chosen so that the conditions $(1_{n-1})$--$(3_{n-1})$ are satisfied. The condition $(2_{n-1})$ implies that for every $i\le n$ we have the inclusion $x_i\cdots x_{n-1}xx=x_i\cdots x_{n-1}x\in U_i^\circ$ (if $i=n$, then we understand that $x_i\cdots x_{n-1}x=x$).
Using the continuity of the shift $s_x:Y\to Y$, $s_x:z\mapsto xz$, we can  find a neighborhood $U_{n}'\subset Y$ of $x$ such that $x_i\cdots x_{n-1}\cdot(U_n'\cup U_n'x)\subset U_i^\circ$ for every $i\le n$. By analogy, we can find a neighborhood $V_n'\subset Y$ of $y$ such that   $y_i\cdots y_{n-1}\cdot(V_n'\cup V_n'y)\subset V_i^\circ$ for every $i\le n$.
By our assumption, there are points $x_n\in X\cap U_n'$ and $y_n\in X\cap V_n'$ such that $x_n\le y_n$. The choice of the neighborhoods $U'_0$ and $V'_0$ ensures that the conditions $(2_n)$ and $(3_n)$ are satisfied.
This completes the inductive step.
\smallskip

Now for every $i\in\w$ consider the chain $C_i=\{x_i\cdots x_n:n\ge i\}\subset U_i^\circ$ in $X$.  By the completeness of $X$, this chain has $\inf C_i\in X\cap\bar C_i\subset X\cap\overline{U_i^\circ}\subset X\cap U_i$. Observing that $\inf C_i\le x_ix_{i+1}\cdots x_n\le x_{i+1}\cdots x_n$ for all $i>n$, we see that $\inf C_i$ is a lower bound of the chain $C_{i+1}$ and hence $\inf C_i\le\inf C_{i+1}$. By the completeness of $X$, for every $i\in\w$ the chain $D_i:=\{\inf C_j:j\ge i\}\subset U_i$ has $\sup D_i\in X\cap\bar D_i\subset X\cap  U_i$. Since the sequence $(\inf C_i)_{i\in\w}$ is increasing, we get $\sup D_0=\sup D_i\in X\cap U_i$ for all $i\in\w$. Consequently, $\sup D_0\in X\cap\bigcap_{i\in\w}U_i$.

By analogy, for every $k\in\w$ consider the chain $E_i=\{y_i\cdots y_n:n\ge i\}\subset V_i^\circ$ in $X$.  By the completeness of $X$, this chain has $\inf E_i\in X\cap\bar E_i\subset X\cap\overline{V_i^\circ}\subset X\cap V_i$. By the completeness of $X$, for every $i\in\w$ the chain $F_i:=\{\inf E_j:j\ge i\}\subset V_i$ has $\sup F_i\in X\cap\bar F_i\subset X\cap \overline{V_i}=X\cap V_i$. Since the sequence $(\inf E_i)_{i\in\w}$ is increasing, we get $\sup F_0=\sup F_i\in X\cap V_i$ for all $i\in\w$. Consequently, $\sup F_0\in X\cap\bigcap_{i\in\w}V_i$.

To finish the proof of Claim~\ref{cl1}, it suffices to show that $\sup D_0\le \sup F_0$. The inductive conditions $(1_n)$, $n\in\w$, imply that $\inf C_i\le \inf E_i$ for all $i\in\w$ and $\sup D_0=\sup\{\inf C_i:i\in\w\}\le \sup\{\inf E_i:i\in\w\}=\sup F_0$.
\end{proof}

Now assume that some uncountable cardinal $\kappa$ we have proved that the property $(*_\lambda)$ holds for any infinite cardinal $\lambda<\kappa$. To prove the proporty $(*_\kappa)$, fix families $(U_\alpha)_{\alpha\in\kappa}$ and $(V_{\alpha})_{\alpha\in\kappa}$ of closed neighborhoods of $x$ and $y$ in $Y$, respectively.

By transfinite induction, we shall construct sequences $(x_\alpha)_{\alpha\in\kappa}$ and $(y_\alpha)_{\alpha\in\kappa}$ of points of $X$ such that for every $\alpha\in\kappa$ the following conditions are satisfied:
\begin{enumerate}
\item[$(1_\alpha)$] $x_\alpha\le y_\alpha$;
\item[$(2_\alpha)$] $\{x_{\alpha_1}\cdots x_{\alpha_n},x_{\alpha_1}\cdots x_{\alpha_n}x\}\subset U_{\alpha_0}^\circ$ for any sequence of ordinals $\alpha_0<\alpha_1<\dots<\alpha_n=\alpha$ with $n\in\IN$;
\item[$(3_\alpha)$] $\{y_{\alpha_1}\cdots y_{\alpha_n},y_{\alpha_1}\cdots y_{\alpha_n}y\}\subset V_{\alpha_0}^\circ$ for any sequence of ordinals $\alpha_0<\alpha_1<\dots<\alpha_n=\alpha$ with $n\in\IN$.
\end{enumerate}
To start the inductive construction, choose neighborhoods $U_0',V_0'\subset Y$ of the points $x,y$ such that $U'_0\cup U_0'x\subset U_0^\circ$ and $V_0'\cup V_0'y\subset V_0$. By our assumption there are points $x_0\in X\cap U_0'$ and $y_0\in X\cap V_0'$ such that $x_0\le y_0$. 

Now assume that for some ordinal $\beta\in\kappa$ we have constructed points $x_\alpha,y_\alpha$ for all ordinals $\alpha<\beta$ so that the conditions $(1_\alpha)$--$(3_\alpha)$ are satisfied. For any $n\in\IN$ and any sequence of ordinals $\alpha_0<\dots<\alpha_{n}=\beta$, use the inductive condition $(2_{\alpha_{n-1}})$ and the continuity of the shift $s_x:Y\to Y$ for finding a neighborhood $U_{\alpha_0,\dots,\alpha_n}\subset Y$ of $x$ such that 
$$x_{\alpha_1}\cdots x_{\alpha_{n-1}}U_{\alpha_0,\dots,\alpha_n}\cup   x_{\alpha_1}\cdots x_{\alpha_{n-1}}U_{\alpha_0,\dots,\alpha_n}x\subset U_{\alpha_0}^\circ.$$If $n=1$, then we assume that $x_{\alpha_1}\cdots x_{\alpha_{n-1}}U_{\alpha_0,\dots,\alpha_n}=U_{\alpha_0,\dots,\alpha_{n}}$.

In similar way choose a neighborhood $V_{\alpha_0,\dots,\alpha_n}\subset Y$ of $y$ such that 
$$y_{\alpha_1}\cdots y_{\alpha_{n-1}}V_{\alpha_0,\dots,\alpha_n}\cup   y_{\alpha_1}\cdots y_{\alpha_{n-1}}V_{\alpha_0,\dots,\alpha_n}y\subset V_{\alpha_0}^\circ.$$

Since $Y$ is a $T_{2\delta}$-space, there are countable families $\U'_{\alpha_0,\dots,\alpha_n}$ and  $\V'_{\alpha_0,\dots,\alpha_n}$ of closed neighborhoods of $x$ and $y$ respectively such that $\bigcap\U'_{\alpha_0,\dots,\alpha_n}\subset U_{\alpha_0,\dots,\alpha_n}$ and $\bigcap\V'_{\alpha_0,\dots,\alpha_n}\subset V_{\alpha_0,\dots,\alpha_n}$. Also choose countable families $\U_\beta'$ and $\V_\beta'$ of closed neighborhoods of $x$ and $y$ in $Y$ such that $\bigcap\U_\beta'\subset U_\beta^\circ$ and $\bigcap\V_\beta'\subset V_\beta^\circ$.

Now consider the families $$\U_\beta=\bigcup_{n=0}^\infty\{\U'_{\alpha_0,\dots,\alpha_n}:\alpha_0<\dots<\alpha_n=\beta\}\mbox{ and }
\V_\beta=\bigcup_{n=0}^\infty\{\V'_{\alpha_0,\dots,\alpha_n}:\alpha_0<\dots<\alpha_n=\beta\}$$ and observe that $\lambda:=\max\{|\U_\beta|,|\V_\beta|\}\le|\w+\beta|<\kappa$. By the inductive assumption, the condition $(*_\lambda)$ holds. So we can find points $x_\alpha\in X\cap\bigcap\U_\beta$ and $y_\beta\in X\cap \bigcap\V_\beta$ such that $x_\alpha\le y_\beta$. Observe that for any $n\in \IN$ and any sequence of ordinals $\alpha_0<\dots<\alpha_n=\beta$ we have $$x_{\alpha_1}\cdots x_{\alpha_n}\in x_{\alpha_1}\cdots x_{\alpha_{n-1}}\cdot \textstyle{\bigcap}\U_\beta\subset  x_{\alpha_1}\cdots x_{\alpha_{n-1}}\cdot \bigcap\U'_{\alpha_0,\dots,\alpha_n}\subset x_{\alpha_1}\cdots x_{\alpha_{n-1}}\cdot U_{\alpha_0,\dots,\alpha_n}\subset U_{\alpha_0}^\circ.$$ By analogy we can prove that $x_{\alpha_1}\cdots x_{\alpha_n}x\in U_{\alpha_0}^\circ$, which means that the inductive condition $(2_\beta)$ is satisfied. By analogy we can check that the inductive condition $(3_\beta)$ is satisfied. This completes the inductive step.
\smallskip

After completing the inductive construction, for every $\alpha\in\kappa$ consider the subsemilattices $$C_\alpha:=\{x_{\alpha_0}\cdots x_{\alpha_n}:\alpha<\alpha_0<\dots<\alpha_n<\kappa\}\mbox{ \ and \ }E_\alpha=\{y_{\alpha_0}\cdots y_{\alpha_n}:\alpha<\alpha_0<\dots<\alpha_n<\kappa\}$$ of $X$. The inductive conditions $(2_\beta)$ and $(3_\beta)$ for $\beta\ge\alpha$ imply that $C_\alpha\subset U_\alpha^\circ\subset U_\alpha$ and $E_\alpha\subset V_\alpha^\circ\subset V_\alpha$. By Lemma~\ref{l:smallest}, the semilattices $C_\alpha$ and $E_\alpha$ have $\inf C_\alpha\in X\cap \bar C_\alpha\subset X\cap U_\alpha$ and $\inf E_\alpha\in X\cap \bar E_\alpha\subset X\cap V_\alpha$. For any ordinals $\alpha<\beta$ in $\kappa$ the inclusions $C_\beta\subset C_\alpha$ and $E_\beta\subset E_\alpha$ imply that the transfinite sequences $(\inf C_\alpha)_{\alpha\in\kappa}$ and $(\inf E_\alpha)_{\alpha\in\kappa}$ are non-decreasing. By the completeness of $X$ for every $\alpha\in\kappa$ the chains $D_\alpha=\{\inf C_\beta:\alpha\le\beta<\kappa\}\subset X\cap U_\alpha$ and $F_\alpha:=\{\inf E_\beta:\alpha\le\beta<\kappa\}\subset X\cap V_\alpha$ have $\sup D_\alpha\in X\cap\bar D_\alpha\subset X\cap U_\alpha$ and  
 $\sup F_\alpha\in X\cap\bar F_\alpha\subset X\cap V_\alpha$. Taking into account that the transfinite sequences $(\inf C_\alpha)_{\alpha\in\kappa}$ and $(\inf E_\alpha)_{\alpha\in\kappa}$ are non-decreasing, we conclude that $\sup D_0=\sup D_\alpha\in X\cap U_\alpha$ and $\sup F_0=\sup E_\alpha\in V_\alpha$ for all $\alpha\in\kappa$. Consequently $\sup D_0\in X\cap\bigcap_{\alpha\in\kappa}U_\alpha$ and $\sup F_0\in X\cap\bigcap_{\alpha\in\kappa}V_\alpha$.

To finish the proof of the property $(*_\kappa)$, it suffices to show that $\sup D_0\le \sup F_0$. The inductive conditions $(1_\alpha)$, $\alpha\in\kappa$, imply that $\inf C_\alpha\le \inf E_\alpha$ for all $\alpha\in\w$ and then $$\sup D_0=\sup\{\inf C_\alpha:\alpha\in\kappa\}\le \sup\{\inf E_\alpha:\alpha\in\kappa\}=\sup F_0.$$
By the Principle of Transfinite Induction, the property $(*_\kappa)$ holds for all infinite cardinals $\kappa$. 
\smallskip

Denote by $\U_x$ and $\U_y$ the families of closed neighborhoods of the points $x$ and $y$, respectively. Taking into account that the space $Y$ is Hausdorff, we conclude that $\{x\}=\bigcap\U_x$ and $\{y\}=\bigcap\U_y$. The property $(*_\kappa)$ for $\kappa=\max\{|\U_x|,|\U_y|\}$ implies that there are points $x'\in X\cap\bigcap\U_x=X\cap\{x\}$ and $y'\in X\cap\bigcap\U_y=X\cap\{y\}$ such that $x'\le y'$. It follows that $x=x'\le y'=y$ and hence $(x,y)\in P$.
\end{proof}

Theorem~\ref{t:Tdelta} implies its own self-improvement.

\begin{theorem}\label{t2} Let $X$ be a complete subsemilattice of a semitopological semilattice $Y$ satisfying the separation axiom $\vv{T}_{2\delta}$. Then the partial order $P:=\{(x,y)\in X\times X:xy=x\}$ of $X$ is closed in $Y\times Y$ and the semilattice $X$ is closed in $Y$.
 \end{theorem}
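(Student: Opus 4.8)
The plan is to reduce Theorem~\ref{t2} to Theorem~\ref{t:Tdelta} by pulling the $\vv T_{2\delta}$ structure across the bijective continuous map. Since $Y$ satisfies $\vv T_{2\delta}$, by definition there is a $T_{2\delta}$-space $Z$ and a bijective continuous map $g\colon Y\to Z$. Equip $Y$ with the topology $\tau'$ generated by $g$ (i.e. the initial topology, which since $g$ is a continuous bijection is just the topology whose open sets are $g^{-1}(W)$ for $W$ open in $Z$); this $\tau'$ is coarser than the original topology $\tau$ on $Y$, and $(Y,\tau')$ is homeomorphic to $Z$, hence a $T_{2\delta}$-space. I first check that $(Y,\tau')$ is still a semitopological semilattice: separate continuity of the multiplication with respect to $\tau'$ follows because each left/right shift $s_a\colon(Y,\tau)\to(Y,\tau)$ is $\tau$-continuous, and composing with $g$ on both sides shows $s_a\colon(Y,\tau')\to(Y,\tau')$ is $\tau'$-continuous (the $\tau'$-open preimage $g^{-1}(W)$ pulls back under $s_a$ to a set which is $\tau$-open, but we need $\tau'$-open — so more care is needed here; see below).

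Next, I would observe that $X$, being a complete topologized semilattice under the subspace topology from $(Y,\tau)$, need not be complete under the coarser subspace topology from $(Y,\tau')$. However, completeness is used in Theorem~\ref{t:Tdelta} only to extract infima and suprema of chains lying inside closed neighborhoods, and what actually matters there is membership in closures computed in $Y$; since the $\tau'$-closure of a set contains its $\tau$-closure, every infimum/supremum witnessed by $\tau$-completeness of $X$ still lies in the $\tau'$-closure. So rather than re-proving completeness, I would apply Theorem~\ref{t:Tdelta} directly to the configuration ``$X$ a complete subsemilattice of the semitopological semilattice $(Y,\tau)$'' — wait, that space is not $T_{2\delta}$. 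The cleaner route: apply Theorem~\ref{t:Tdelta} with the ambient space $(Y,\tau')$, having first noted that $X\subset(Y,\tau')$ is complete because any chain $C\subset X$ has its $\tau$-inf and $\tau$-sup in the $\tau$-closure, hence in the (larger) $\tau'$-closure. This gives that $P$ is closed in $(Y,\tau')\times(Y,\tau')$, and since $\tau'\subseteq\tau$ the product topology $\tau'\times\tau'$ is coarser than $\tau\times\tau$, so $P$ is a fortiori closed in $(Y,\tau)\times(Y,\tau)=Y\times Y$.

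Finally, to conclude that $X$ itself is closed in $Y$: note $X={\uparrow}$-section trick, namely $X$ equals the set of $x\in Y$ with $(x,x)\in P$, i.e. $X=\{y\in Y:(y,y)\in P\}=\delta^{-1}(P)$ where $\delta\colon Y\to Y\times Y$, $\delta(y)=(y,y)$, is the continuous diagonal embedding. Since $P$ is closed in $Y\times Y$ and $\delta$ is continuous, $X$ is closed in $Y$. (One must check $X=\delta^{-1}(P)$: if $(y,y)\in P$ then by definition of $P$ we have $y\in X$; conversely every $x\in X$ satisfies $xx=x$, so $(x,x)\in P$.)

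The main obstacle I expect is the verification that $(Y,\tau')$ is a semitopological semilattice, i.e. that every shift $s_a\colon(Y,\tau')\to(Y,\tau')$ is continuous. This is not automatic from $\tau'$-continuity of $g$ alone, because $g$ is not assumed to be a homomorphism. The fix is to be less ambitious about the coarse topology: instead of using the topology generated by $g$, one uses the map $g$ itself only through Theorem~\ref{t:Tdelta}'s proof, or — better — one observes that the statement ``$Y$ satisfies $\vv T_{2\delta}$'' should be used exactly as in the closing paragraph of the proof of Theorem~\ref{t:Tdelta}, replacing ``$Y$ is Hausdorff, so $\bigcap\U_x=\{x\}$'' by the $\vv T_{2\delta}$-hypothesis composed with continuity of $g$ to get a countable family of closed neighborhoods of $x$ with intersection a subset of any prescribed open set, whence (iterating) $\bigcap$ of all closed neighborhoods is still $\{x\}$. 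Concretely, $\vv T_{2\delta}\Rightarrow\vv T_2$, and a $\vv T_2$-space is Hausdorff since a continuous injection into a Hausdorff space separates points; so $Y$ is Hausdorff and the closing argument of Theorem~\ref{t:Tdelta} goes through verbatim once we know the property $(*_\kappa)$, which in turn only used $T_{2\delta}$-ness to produce, for the open sets $U_{\alpha_0,\dots,\alpha_n}$, countable families of closed neighborhoods with intersection inside them — and such families are produced from the $T_{2\delta}$-space $Z$ via $g$ (the $g$-preimage of a $Z$-closed neighborhood is a $\tau$-closed neighborhood). Thus the honest plan is: run the proof of Theorem~\ref{t:Tdelta} with ``$Y$ is $T_{2\delta}$'' weakened to ``$g\colon Y\to Z$ continuous bijection onto a $T_{2\delta}$-space'', noting every use of the $T_{2\delta}$ axiom survives under pullback by $g$, and every use of Hausdorffness survives because $\vv T_2$ spaces are Hausdorff; then deduce closedness of $X$ from closedness of $P$ via the diagonal.
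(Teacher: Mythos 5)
Your reduction strategy (coarsen the topology on $Y$ to a $T_{2\delta}$ one, apply Theorem~\ref{t:Tdelta} there, and transfer closedness back) is exactly the paper's, and two of your ingredients are sound: completeness of $X$ survives passage to a coarser topology because closures only grow, and closedness of $P$ in a coarser product topology implies closedness in the original one; your diagonal argument $X=\delta^{-1}(P)$ for the closedness of $X$ is also correct (and slightly slicker than the paper's, which uses the smallest element of $X$ from Lemma~\ref{l:smallest}). But you never close the one gap you yourself flag, and your fallback plan does not close it either. The initial topology $\tau'$ induced by a single bijective continuous map $g\colon Y\to Z$ need not make the shifts $s_a$ continuous, as you observe. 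Your proposed fix --- rerun the proof of Theorem~\ref{t:Tdelta} assuming only $\vv{T}_{2\delta}$ and ``pull back'' each application of the $T_{2\delta}$ axiom through $g$ --- fails at the inductive step: the open sets $U_{\alpha_0,\dots,\alpha_n}$ to which the axiom must be applied are produced by the separate continuity of multiplication in the \emph{original} topology of $Y$, hence are arbitrary open neighborhoods of $x$, not sets of the form $g^{-1}(W)$ with $W$ open in $Z$. Since $g$ need not be open, $g(U_{\alpha_0,\dots,\alpha_n})$ need not be a neighborhood of $g(x)$ in $Z$, so the $T_{2\delta}$ property of $Z$ gives no countable family of closed neighborhoods with intersection inside $U_{\alpha_0,\dots,\alpha_n}$; and without that containment the chosen point $x_\beta$ need not land in $U_{\alpha_0,\dots,\alpha_n}$, so condition $(2_\beta)$ is lost. (Your remark that $\vv{T}_2$ implies Hausdorff is fine, but it only rescues the closing paragraph of that proof, not the inductive construction.)

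The missing idea is to take $\tau$ not as the initial topology of one map $g$ but as the \emph{finest} $T_{2\delta}$-topology on $Y$ coarser than the original --- equivalently, the initial topology with respect to \emph{all} continuous maps of $Y$ into $T_{2\delta}$-spaces. This is still $T_{2\delta}$ because the class of $T_{2\delta}$-spaces is closed under Tychonoff products and subspaces (and it is $T_1$ because at least one of these maps, namely $g$, is injective). The payoff of this universal choice is precisely the continuity of the shifts: for each $a\in Y$ the composition of $s_a\colon Y\to Y$ with the identity $Y\to(Y,\tau)$ is a continuous map of $Y$ into a $T_{2\delta}$-space, so by the universal property of $\tau$ it remains continuous on $(Y,\tau)$; hence $(Y,\tau)$ is a semitopological $T_{2\delta}$-semilattice and Theorem~\ref{t:Tdelta} applies to it directly, with all the auxiliary neighborhoods $U_{\alpha_0,\dots,\alpha_n}$ now being $\tau$-open. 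With this one modification the rest of your argument goes through.
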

 
 \begin{proof} Since $Y$ satisfies the separation axiom $\vv{T}_{2\delta}$ and the class of $T_{2\delta}$-spaces is closed under taking subspaces and Tychonoff products, the space $Y$ admits a weaker topology $\tau$ such that  $(Y,\tau)$ is a $T_{2\delta}$-space and any continuous map $Y\to Z$ to a $T_{2\delta}$-space $Z$ remains continuous in the topology $\tau$. This implies that for every $a\in Y$ the shift $s_a:Y\to Y$, $s_a:y\mapsto ay$, is continuous with respect to the topology $\tau$ and hence $Y$ endowed with the topology $\tau$ is a semitopological semilattice.

Since the identity map $Y\to(Y,\tau)$ is continuous, the completeness of the subsemilattice $X\subset Y$ implies the completeness of $X$ with respect to the subspace topology inherited from $\tau$. Applying Theorem~\ref{t:Tdelta}, we conclude that the partial order $P=\{(x,y)\in X\times X:xy=x\}$ is closed in the square of the space $(Y,\tau)$ and hence is closed in the original topology of $Y$ (which is stronger than $\tau$).

By Lemma~\ref{l:smallest}, the semilattice $X$ has the smallest element $s\in X$.
Observing that $X=\{x\in Y:(s,x)\in P\}$, we see that the set $X$ is closed in $Y$.
\end{proof}
 
\section{Proof of Theorem~\ref{main}}\label{s:m}

Let $h:X\to Y$ be a continuous homomorphism from a complete topologized semilattice $X$ to a semitopological semilattice $Y$ satisfying the separation axiom $\vv{T}_{2\delta}$. We need to show that the subsemilattice $h(X)$ is closed in $Y$. This will follow from Theorem~\ref{t2} as soon as we show that the semitopologhical semilattice $h(X)$ is complete.

Given any non-empty chain $C\subset h(X)$, we should show that $C$ has $\inf C\in\bar C$ and $\sup C\in\bar C$.

For every $c\in C$ consider the closed subsemilattice $S_c:=h^{-1}({\uparrow}c)$ in $X$. By Lemma~\ref{l:smallest} the semilattice $S_c$ has $\inf S_c\in\bar S_c=S_c$, which means that $\inf S_c$ is the smallest element of $S_c$.  It follows from $h(S_c)={\uparrow}c$ that $h(\inf S_c)\in h(S_c)={\uparrow}c$ and hence $c\le h(\inf S_c)$. On the other hand, for any $x\in h^{-1}(c)$ we get $\inf S_c\le x$ and hence $h(\inf S_x)\le h(x)=c$ and finally $h(\inf S_c)=c$. 

It is clear that for any $x\le y$ in $C$, we get $S_x\subset S_y$ and hence $\inf S_x\le\inf S_y$. This means that $D=\{\inf S_x:x\in C\}$ is a chain in $X$. By the completeness of $X$, the chain $D$ has $\inf D\in\bar D$ and $\sup D\in\bar D$. The continuity of the homomorphism $h$ implies that $h(\inf D)\in h(\bar D)\subset \overline{h(D)}=\bar C$ and $h(\sup D)\in h(\bar D)\subset \overline{h(D)}=\bar C$.
It remains to show that $h(\inf D)=\inf C$ and $h(\sup D)=\sup C$.

Taking into account that $h$ is a semilattice  homomorphism, we can show that $h(\inf D)$ is a lower bound of the set $C=h(D)$ in $Y=h(X)$. For any other lower bound $b\in h(X)$ of $C$, we see that $C\subset{\uparrow} b$, $D\subset h^{-1}({\uparrow}b)=S_b$ and hence $\inf S_b\le \inf D$, which implies $b=h(\inf S_b)\le h(\inf D)$. So, $\inf C=h(\inf D)$. By analogy we can prove that $\sup C=h(\sup D)$. 
\newpage

\end{document}